\tikzset{negated/.style={
        decoration={markings,
            mark= at position 0.5 with {
                \node[transform shape] (tempnode) {$\backslash$};
            }
        },
        postaction={decorate}
    }
}
\newtheorem{theorem}{Theorem}[section]
\newtheorem{definition}[theorem]{Definition}
\newtheorem{proposition}[theorem]{Proposition}
\newtheorem{corollary}[theorem]{Corollary}
\newtheorem{example}[theorem]{Example}
\renewcommand{\qed}{\hspace*{\fill}\rule{1 ex}{1.5 ex}\\}
\def\land{\wedge}       \def\lor{\vee}
\let\phi=\varphi
\let\theta=\vartheta
\let\epsilon=\varepsilon
\def\TITLE{\bf\MakeUppercase{Subalgebra independence}}
\def\AUTHOR{\small\MakeUppercase{Alexa Gopaulsingh}, \and \small\MakeUppercase{Zal\'an Gyenis}, \and \small\MakeUppercase{\"Ovge \"Ozt\"urk}\normalsize}
\def\DATE{\today}
\def\ABSTRACT{Subobject independence as morphism co-possibility has recently
been defined in \cite{gyenis2018categorial} and studied in the context of algebraic
quantum field theory. This notion of independence is handy when it comes to systems
coming from physics, but when directly applied to classical algebras, subobject 
independence is not entirely satisfactory. The sole purpose of this note is to
introduce the notion of subalgebra independence, which is a slight variation of
subobject independence, yet this modification enables us to connect subalgebra
independence to more traditional notions of independence. Apart from drawing
connections between subalgebra independence and coproducts and congruences, we 
mainly illustrate the notion by discussing examples.}
\def\KEYWORDS{Independence, Subobject independence, Subalgebra independence}
\DeclareMathOperator{\id}{id}
\DeclareMathOperator{\triv}{triv}
\DeclareMathOperator{\Con}{Con}
\def\A{\mathfrak{A}}
\def\B{\mathfrak{B}}
\def\C{\mathfrak{C}}
\def\X{\mathfrak{X}}
\def\D{\mathfrak{D}}
\def\N{\mathfrak{N}}
\def\indep{\mathrel{\raise0.2ex\hbox{\ooalign{\hidewidth$\vert$\hidewidth\cr\raise-0.9ex\hbox{$\smile$}}}}}
\def\congindep{\indep^{\!\!c}}
\def\amalg{\oplus}
\begin{document}
	
    \title{\TITLE}
    \author{ \AUTHOR } \date{\DATE}
    \maketitle
    \thispagestyle{empty}
	
    \begin{abstract}
        \ABSTRACT
        \vspace{5mm}
    \end{abstract}

	\noindent {\bf AMS Subject Classification}: 08A05, 08A30, 08A35 \\
	\noindent {\bf Keywords:} \KEYWORDS.

    \vspace{5mm}  \normalsize	
	


\section{Introduction}

	Specifying notions of independence of subsystems of a larger system is
	crucial in the axiomatic approach to algebraic quantum field theory. 
	It turns out that such notions of independence can be specified in a number 
	of nonequivalent ways, Summers \cite{summers1990} gives a review of the 
	rich hierarchy of independence notions; for a non-technical review of subsystem 
	independence concepts that include more recent developments as well, see 
	\cite{summers2009}.	Generalizing earlier attempts, a 
	purely categorial formulation of independence of subobjects as morphism co-possibility
	has been introduced and	studied in the recent papers
	\cite{redei2014categorial,redei2015categorial} and 
	\cite{gyenis2018categorial}. Two subobjects of
	an object are defined to be independent if any two morphisms on the
	two subobjects are jointly implementable by a single morphism on the larger object.
	More precisely, let us recall the definition from \cite{gyenis2018categorial}.
	Suppose $M$ is a class of monomorphisms and 
	$H$ is another class of morphisms of a category.
	\begin{definition}\label{def:independence-morphisms}
		$M$-morphisms $f_A:A\to X$ and $f_B:B\to X$ are	called $H$-independent 
		if for any two $H$-morphisms $\alpha:A\to A$ and $\beta:B\to B$
		there is an $H$-morphism $\gamma:X\to X$ such that the diagram below commutes.
		\begin{center}
				\begin{tikzcd}[column sep=large,row sep=large]
		A \arrow[->]{r}{f_A} \arrow{d}{\alpha} & X\arrow[dotted]{d}{\gamma}\arrow[<-]{r}{f_B} & B\arrow{d}{\beta} \\
		A \arrow[->]{r}{f_A} & X\arrow[<-]{r}{f_B} & B
				\end{tikzcd}
		\end{center}			
	\end{definition}
	The objects $A$ and $B$ can be regarded as $M$-subobjects of $X$, 
	and it is intuitively clear why $H$-independence of $M$-subobjects $A$ and $B$ is an 
	independence condition: fixing the morphism $\alpha_A$ on object $A$ does not interfere 
	with fixing any morphism $\alpha_B$ on object $B$, and vice versa. That is to say, 
	morphisms can be independently chosen on these objects seen as subobjects of object 
	$X$. 
	
	In algebraic quantum field theory, independence given by the definition above is
	specified in the context of the category of special $C^*$-algebras taken with the
	class of operations (completely positive, unit preserving, linear maps)
	between $C^*$-algebras. Considerations from physics ensure injectivity of the
	``large system'' $X$ and therefore extending morphisms from the subobjects to the 
	larger object in which independence is defined is always possible. 
	
	Although the definitions employed in \cite{gyenis2018categorial} are rather general, 
	they become too restrictive when injectivity is not guaranteed. To
	reiterate: the main concern is that independence of $A$ and $B$ should not depend 
	on whether morphisms can be extended to the entire $X$,
	but rather one should care for extensions to the subobject ``generated by'' $A$ and $B$ 
	only. In
	other words, in a concrete category of structures, independence of $A$ and $B$ should depend 
	only on how elements that can be term-defined from $A$ and $B$ relate to each other and not on 
	elements that have ``nothing to do'' with $A$ and $B$. 
	Algebraically, term-definable elements are exactly the elements of the substructure generated 
	by $A$ and $B$. Defining the notion of a generated subobject in category theoretic terms 
	in not unproblematic and we do not take the trouble here to deal with such issues. 
	Instead, we focus almost exclusively on concrete algebras or categories of algebras.
	We introduce a slight modification to Definition \ref{def:independence-morphisms}
	which makes it more useful among algebras. We illustrate this `usefulness' by examples
	where subalgebra independence coincide with well-known
	traditional notions of independence:
	\begin{itemize}\itemsep-1pt
		\item Subset independence is disjointness.
		\item Subspace independence is linear independence.
		\item Boolean subalgebra independence is logical independence.
		\item Abelian subgroup independence is the traditional notion of group independence.\footnote{However, the case of non-Abelian groups is very different.}
	\end{itemize}
	Finally, we mention a related concept that we call congruence independence.


\section{Subalgebra independence}

Let us fix an algebraic (or more generally a first order) similarity type. 
When we speak about algebras or structures, then we understand these algebras 
(structures) to have the same similarity type. We use the convention that 
algebras are denoted by Fraktur letters $\A$ and the universe of the 
algebra $\A$ is denoted by the same but capital letter $A$.
For subalgebras $\A, \B$ of $\X$ we write $\A\lor \B$ for the subalgebra of 
$\X$ generated by $A\cup B$.
\medskip

\begin{definition}[Subalgebra-independence]\label{def:subalgindep}
	Let $\X$ be an algebra and $\A, \B$ be subalgebras of $\X$. 
	We say that $\A$ and $\B$ are \emph{subalgebra-independent in} $\X$ if for any homomorphisms 
	$\alpha:\A\to\A$ and $\beta:\B\to\B$ there is a homomorphism $\gamma:\A\lor\B\to\A\lor\B$
	such that the diagram below commutes.
	\begin{center}
		\begin{tikzcd}[column sep=large,row sep=large]
			\A \arrow[->]{r}{\subseteq} \arrow{d}{\alpha} & \A \lor \B\arrow[dotted]{d}{\gamma}\arrow[<-]{r}{\supseteq} & \B\arrow{d}{\beta} \\
			\A \arrow[->]{r}{\subseteq} & \A \lor \B\arrow[<-]{r}{\supseteq} & \B
		\end{tikzcd}
	\end{center}
	The homomorphism $\gamma$ is called the joint extension of $\alpha$ and $\beta$ (to $\A\lor\B$).
	We write $\A\indep_{\X}\B$ when 
	$\A$ and $\B$ are subalgebra-independent in $\X$, 
	and we might omit the subscript $\X$ when it is clear from the context. 
		
\end{definition}
\medskip

\noindent When the algebras in question have particular names, e.g. groups, fields, etc., then we specify the independence as ``subgroup-independence", ``subfield-independence" etc. \\

Comparing subalgebra independence with Definition \ref{def:independence-morphisms} it is clear
that the inclusion mappings take the role of $M$-morphisms and $H$ is the class of all
homomorphisms between algebras. The main difference, however, is that in subalgebra
independence we extend the mappings $\alpha$ and $\beta$ to the substructure generated by
$A\cup B$ only. We also note that $H$ could be chosen differently, e.g. it could be the 
class of automorphisms, leading to variations of the notion of independence. We do not 
discuss such variations in this paper. \\

Before discussing the examples, let us state some useful propositions.
First, it is an immediate consequence of the definition of subalgebra independence that
the joint extension of $\alpha$ and $\beta$ is always unique (if exists):

\begin{proposition}\label{claim:unique}
	If the joint extension $\gamma:\A\lor\B\to\A\lor\B$ of $\alpha:\A\to\A$ and $\beta:\B\to\B$ 
	exists, then it is unique and is given by
	\[
\gamma\big( t^{\A\lor\B}(\vec{a}, \vec{b}) \big) \; = \; t^{\A\lor\B}\big( \alpha(\vec{a}), \beta(\vec{b}) \big)
	\]	
	for each term $t(\vec x,\vec y)$ and elements $\vec a\in A$, $\vec b\in B$.
\end{proposition}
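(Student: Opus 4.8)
The plan is to establish existence-uniqueness together by showing that the displayed formula is forced on us, and that it is well-defined. Every element of $\A\lor\B$ has the form $t^{\A\lor\B}(\vec a,\vec b)$ for some term $t$ and tuples $\vec a\in A$, $\vec b\in B$, since $\A\lor\B$ is generated by $A\cup B$. So suppose $\gamma$ is any joint extension of $\alpha$ and $\beta$. I would first record that commutativity of the left square says $\gamma(a)=\gamma(a^{\A\lor\B})=\alpha(a)$ for all $a\in A$ (here I use that the inclusion $\A\hookrightarrow\A\lor\B$ carries $a$ to $a$), and symmetrically $\gamma(b)=\beta(b)$ for all $b\in B$. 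Then, since $\gamma$ is a homomorphism $\A\lor\B\to\A\lor\B$, it commutes with the interpretation of every term, so
\[
\gamma\big(t^{\A\lor\B}(\vec a,\vec b)\big)=t^{\A\lor\B}\big(\gamma(\vec a),\gamma(\vec b)\big)=t^{\A\lor\B}\big(\alpha(\vec a),\beta(\vec b)\big).
\]
This is exactly the claimed formula, and it shows that $\gamma$ is determined on all of $\A\lor\B$ by $\alpha$ and $\beta$; hence at most one joint extension exists.

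The one genuine subtlety — and the step I expect to be the main obstacle, modest as it is — is well-definedness of the right-hand side as a function of the argument. A priori an element $c\in\A\lor\B$ can be written as $t^{\A\lor\B}(\vec a,\vec b)$ in many ways, so one must check that whenever $s^{\A\lor\B}(\vec a,\vec b)=t^{\A\lor\B}(\vec a',\vec b')$ one also has $s^{\A\lor\B}(\alpha(\vec a),\beta(\vec b))=t^{\A\lor\B}(\alpha(\vec a'),\beta(\vec b'))$. But this is not something we need to prove in the abstract: the proposition is conditional, assuming a joint extension $\gamma$ exists. Given such a $\gamma$, the computation above shows the value $t^{\A\lor\B}(\alpha(\vec a),\beta(\vec b))$ equals $\gamma(c)$, which depends only on $c$; so consistency of the formula is automatic from the existence hypothesis. (If one wanted to drop the hypothesis and instead verify that the formula defines a homomorphism, one would need the appropriate compatibility condition on $\alpha,\beta$ — precisely the condition that governs when the subalgebras are independent — but that is a separate matter not claimed here.)

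So the write-up is short: let $\gamma$ be a joint extension; read off from the two triangles that $\gamma\restriction A=\alpha$ and $\gamma\restriction B=\beta$; then use that homomorphisms commute with term operations to derive the displayed identity for an arbitrary element $t^{\A\lor\B}(\vec a,\vec b)$ of $\A\lor\B$; conclude that any two joint extensions agree pointwise, hence are equal. No nontrivial calculation is involved beyond the single line of term-pushing displayed above.
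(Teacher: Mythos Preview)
Your argument is correct and matches the paper's proof essentially line for line: both note that every element of $\A\lor\B$ is $t^{\A\lor\B}(\vec a,\vec b)$ for some term $t$, then use that a homomorphism extending $\alpha$ and $\beta$ must satisfy $\gamma(t^{\A\lor\B}(\vec a,\vec b))=t^{\A\lor\B}(\gamma(\vec a),\gamma(\vec b))=t^{\A\lor\B}(\alpha(\vec a),\beta(\vec b))$. Your extra paragraph on well-definedness is a welcome clarification the paper leaves implicit, but it does not change the approach.
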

\begin{proof}
	Elements of $\A\lor\B$ are of the form $t^{\A\lor\B}(\vec{a},\vec{b})$ for $\vec{a}\in A$
	and $\vec{b}\in B$. As $\gamma$ is a homomorphism that extends both $\alpha$ and $\beta$, 
	we must have
	\[
	\gamma\big( t^{\A\lor\B}(\vec{a}, \vec{b}) \big) \;=\;
	t^{\A\lor\B}\big( \gamma(\vec{a}), \gamma(\vec{b}) \big) \;=\;
	t^{\A\lor\B}\big( \alpha(\vec{a}), \beta(\vec{b}) \big).
	\]
\end{proof}

\noindent 
Let $\mathbf{K}$ be a class of similar algebras regarded as a category
with homomorphisms as morphisms. Let $\A_1,\A_2\in\mathbf{K}$ and
consider embeddings $e_i:\A_i\to\C$. Then $\C$
is a coproduct of $\A_1$ and $\A_2$ in $\mathbf{K}$ iff $\C$ has the following universal 
property with respect to $\mathbf{K}$: for any $\D\in\mathbf{K}$ and 
homomorphisms $f_i:\A_i\to\D$ there is a homomorphism $g:\C\to\D$ such that 
$f_i = g\circ e_i$ ($i=1,2$). The coproduct, if exists, is unique up to isomorphism.
If $\mathbf{K}$ is clear from the context we denote a coproduct of 
$\A_1$ and $\A_2$ by $\A_1\amalg\A_2$. In what follows, we assume that $\A$ and $\B$ are
(identified with) subalgebras of the coproduct $\A\amalg\B$.

\begin{proposition}\label{claim:coprod}
	Consider $\A$ and $\B$ as subalgebras of the coproduct $\A\amalg\B$. Then any
	pair of homomorphisms $\alpha:\A\to\A$ and $\beta:\B\to\B$ has a joint 
	extension to a homomorphism $\alpha\amalg\beta:\A\amalg\B\to\A\amalg\B$.
\end{proposition}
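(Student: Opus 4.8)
The plan is to use the universal property of the coproduct directly, applied twice. First I would note that $\A$ and $\B$ come with the coproduct injections $e_\A:\A\to\A\amalg\B$ and $e_\B:\B\to\A\amalg\B$, which under our identification are just the inclusion maps $\A\subseteq\A\amalg\B\supseteq\B$. Given $\alpha:\A\to\A$ and $\beta:\B\to\B$, I would form the two composite homomorphisms $e_\A\circ\alpha:\A\to\A\amalg\B$ and $e_\B\circ\beta:\B\to\A\amalg\B$. These are precisely a pair of homomorphisms out of $\A$ and $\B$ into a common target object in $\mathbf K$, so the universal property of $\A\amalg\B$ yields a unique homomorphism $\gamma:\A\amalg\B\to\A\amalg\B$ with $\gamma\circ e_\A = e_\A\circ\alpha$ and $\gamma\circ e_\B = e_\B\circ\beta$.

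Next I would observe that these two equations are exactly the statement that the independence square of Definition \ref{def:subalgindep} commutes, once one rewrites $e_\A$ and $e_\B$ as the inclusions $\subseteq$ and $\supseteq$; the left square says $\gamma$ restricted to $\A$ equals $\alpha$ and the right square says $\gamma$ restricted to $\B$ equals $\beta$. Hence $\gamma$ is a joint extension of $\alpha$ and $\beta$ in the sense of Definition \ref{def:subalgindep}. I would then name it $\alpha\amalg\beta$, which is justified because by Proposition \ref{claim:unique} the joint extension is unique when it exists, so the notation is unambiguous (and it also matches the coproduct-of-morphisms notation, since $\gamma$ is the map induced by $e_\A\circ\alpha$ and $e_\B\circ\beta$).

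The only genuine subtlety — and the step I would be most careful about — is the identification of the coproduct injections with inclusion maps. The injections $e_i$ into a coproduct need not a priori be inclusions, or even injective in a general category; the excerpt sidesteps this by explicitly stating "we assume that $\A$ and $\B$ are (identified with) subalgebras of the coproduct $\A\amalg\B$", so under that standing convention the $e_i$ are the inclusions and there is nothing further to check. I would simply invoke that convention. I do not expect any real obstacle beyond this bookkeeping: the proposition is essentially a restatement of the universal property, and the whole argument is a couple of lines once the setup is unwound.
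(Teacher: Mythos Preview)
Your argument is correct and is essentially the paper's own proof: both compose the coproduct injections with $\alpha$ and $\beta$ to obtain maps $e_\A\circ\alpha$ and $e_\B\circ\beta$ into $\A\amalg\B$, then invoke the universal property of the coproduct to produce the desired $\gamma=\alpha\amalg\beta$. Your additional remarks on the identification of the injections with inclusions and on uniqueness are accurate but not needed for the bare statement.
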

\begin{proof}
	From the diagram below on the left-hand side, by composing arrows, 
	one gets the diagram on the right-hand side which is a coproduct diagram. 
	Therefore a suitable $\gamma$ with the dotted arrow 
	exists and completes the proof.
	\begin{center}
		\begin{tabular}{cc}
		\begin{tikzpicture}
		  \matrix (m) [matrix of math nodes,row sep=3em,column sep=4em,minimum width=2em]
		  {
			 \A  & \A \amalg \B  & \B\\
			 \A  & \A \amalg \B  & \B\\};
		  \path[-stealth]
			(m-1-1) edge[->] node [above] {$e_{A}$} (m-1-2)
					edge node [left]  {$\alpha$} (m-2-1)
			(m-2-1) edge[->] node [below] {$e_{A}$} (m-2-2)
			(m-1-3) edge[->] node [above] {$e_{B}$} (m-1-2)
					edge node [right] {$\beta$} (m-2-3)
			(m-2-3) edge[->] node [below] {$e_{B}$} (m-2-2);
		\end{tikzpicture}	
		& \quad
		\begin{tikzpicture}
		  \matrix (m) [matrix of math nodes,row sep=3em,column sep=4em,minimum width=2em]
		  {
			 \A  & \A \amalg \B  & \B\\
			     & \A \amalg \B  &    \\};
		  \path[-stealth]
			(m-1-1) edge[->] node [above] {$e_{A}$} (m-1-2)
					edge node [below] {$\alpha e_{A}$} (m-2-2)
			(m-1-2) edge [dotted] node [right] {$\gamma$} (m-2-2)
			(m-1-3) edge[->] node [above] {$e_{B}$} (m-1-2)
					edge node [below] {$\beta e_{B}$} (m-2-2);
		\end{tikzpicture}				
	\end{tabular}
	\end{center}
\end{proof}

\begin{proposition}\label{prop:alg-coprodindep}
	Subalgebras $\A$ and $\B$ of the coproduct $\A\amalg\B$ are subalgebra-independent
	provided $\A\lor\B = \A\amalg\B$.
\end{proposition}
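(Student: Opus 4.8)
The plan is to read the statement off directly from Proposition \ref{claim:coprod}: it already does all the real work, and what remains is only to match its conclusion against the diagram in Definition \ref{def:subalgindep}. So, given arbitrary homomorphisms $\alpha:\A\to\A$ and $\beta:\B\to\B$, I would invoke Proposition \ref{claim:coprod} to obtain the homomorphism $\alpha\amalg\beta:\A\amalg\B\to\A\amalg\B$ satisfying $(\alpha\amalg\beta)\circ e_A = e_A\circ\alpha$ and $(\alpha\amalg\beta)\circ e_B = e_B\circ\beta$, where $e_A,e_B$ are the coproduct injections, and then set $\gamma:=\alpha\amalg\beta$.

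The next step is the bookkeeping that turns this into subalgebra independence. By the standing convention (stated just before Proposition \ref{claim:coprod}) that $\A$ and $\B$ are identified with subalgebras of $\A\amalg\B$, the injections $e_A$ and $e_B$ are literally the inclusion maps $\A\hookrightarrow\A\amalg\B$ and $\B\hookrightarrow\A\amalg\B$. Under this identification the two commuting squares above become $\gamma|_A = \alpha$ and $\gamma|_B = \beta$, i.e.\ $\gamma$ is a simultaneous extension of $\alpha$ and $\beta$ — which is exactly what the diagram in Definition \ref{def:subalgindep} demands of the dotted arrow. Finally I would use the hypothesis $\A\lor\B = \A\amalg\B$: this is precisely what guarantees that $\gamma$ has the right domain and codomain, namely that it is a homomorphism $\A\lor\B\to\A\lor\B$ rather than merely a self-homomorphism of the (a priori possibly larger) coproduct. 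Hence $\gamma$ serves as the joint extension and $\A\indep_{\A\amalg\B}\B$.

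I do not expect a genuine obstacle here; the proposition is essentially a corollary of Proposition \ref{claim:coprod}. The only point requiring any care is the identification of the coproduct injections $e_A,e_B$ with the subalgebra inclusions, which is what makes ``joint extension in the coproduct sense'' and ``joint extension in the sense of Definition \ref{def:subalgindep}'' literally the same thing; once that is noted, and the hypothesis $\A\lor\B=\A\amalg\B$ is used to pin down the codomain, the argument is immediate.
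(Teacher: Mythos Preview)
Your proposal is correct and matches the paper's approach exactly: the paper's proof is the single line ``Immediate from Proposition \ref{claim:coprod},'' and you have simply unpacked what that immediacy consists of (identifying the coproduct injections with the inclusions and using the hypothesis $\A\lor\B=\A\amalg\B$ to get the right domain and codomain).
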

\begin{proof}
	Immediate from Proposition \ref{claim:coprod}.
\end{proof}

It is clear that there is a canonical surjective homomorphism
$q:\A\amalg\B\to\A\lor\B$. Take homomorphisms 
$\alpha:\A\to\A$ and $\beta:\B\to\B$ and consider the diagram below.
\begin{center}
	\begin{tikzpicture}
		  \matrix (m) [matrix of math nodes,row sep=3em,column sep=4em,minimum width=2em]
		  {
			 \A\amalg\B  & \A \amalg \B \\
			 \A\lor\B  & \A\lor\B\\};
		  \path[-stealth]
			(m-1-1) edge[->] node [above] {$\alpha\amalg\beta$} (m-1-2)
					edge node [left]  {$q$} (m-2-1)
			(m-1-2) edge node [right] {$q$} (m-2-2)
			(m-2-1) edge[dotted,->] node [below] {$\gamma$} (m-2-2);
	\end{tikzpicture}	
\end{center}	
Then the joint extension $\gamma:\A\lor\B\to\A\lor\B$ of $\alpha$ and $\beta$ exists
if and only if the mapping
\[  \gamma(q(x)) = q((\alpha\amalg\beta)(x)) \]
is well-defined, that is, $\alpha\amalg\beta$ is ``compatible'' with the kernel $\ker(q)$. 
We make use of this observation later on when we discuss the case of groups.

\bigskip

\noindent Let us see the examples without further ado. 


\subsection{Sets}

Sets can be regarded as structures having the empty set as similarity type. 
If $A$ and $B$ are subsets of $C$, then the subset of $C$ 
generated by $A$ and $B$ is simply their union $A\cup B$. 
It is straightforward to check that subset independence coincides with disjointness. 

\begin{proposition}
	For $A,B \subseteq C$ we have $A \indep B$ if and only if $A\cap B = \emptyset$.
\end{proposition}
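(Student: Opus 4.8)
The plan is to prove both directions directly, exploiting two features of the empty similarity type: between sets a homomorphism is just a function, and the generated subset $A\lor B$ is simply the union $A\cup B$.

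For the implication $A\cap B=\emptyset \Rightarrow A\indep B$, I would take arbitrary functions $\alpha\colon A\to A$ and $\beta\colon B\to B$ and form $\gamma=\alpha\cup\beta$, regarding functions as sets of ordered pairs. Because $\alpha$ and $\beta$ have disjoint domains, $\gamma$ is a well-defined function with domain $A\cup B$; its values lie in $A\cup B$ since $\alpha[A]\subseteq A$ and $\beta[B]\subseteq B$; and, there being no operations to respect, $\gamma$ is vacuously a homomorphism $A\cup B\to A\cup B$. Since $\gamma$ restricts to $\alpha$ on $A$ and to $\beta$ on $B$, the square in Definition~\ref{def:subalgindep} commutes, so $A\indep B$; by Proposition~\ref{claim:unique} this $\gamma$ is moreover the unique joint extension.

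For the converse I would argue by contraposition. Assuming $A\cap B\neq\emptyset$, fix $c\in A\cap B$ and pick an element $d\neq c$ lying in $A$ or in $B$; say $d\in A$ (the other case is symmetric). Let $\alpha\colon A\to A$ be any function with $\alpha(c)=d$ and let $\beta\colon B\to B$ be the identity. Any joint extension $\gamma$ would have to satisfy both $\gamma(c)=\alpha(c)=d$ and $\gamma(c)=\beta(c)=c$, forcing $d=c$, a contradiction. Hence $\alpha$ and $\beta$ admit no joint extension, so $A$ and $B$ are not subset-independent.

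I do not expect a genuine difficulty in either direction. The only point requiring care is the choice of the incompatible pair $\alpha,\beta$ in the converse, which needs a common point of $A$ and $B$ that can be routed to two distinct images; this is available in every case except the degenerate one where $A=B=\{c\}$ is a single common point, which has to be set aside (there the two subsets are, trivially, independent).
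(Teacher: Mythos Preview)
Your approach matches the paper's: for the nontrivial direction the paper simply says one takes permutations of $A$ and $B$ that act differently on a common point, which is precisely your contrapositive argument with $\alpha(c)=d\neq c$ and $\beta=\id_B$. The forward direction via $\gamma=\alpha\cup\beta$ is fine and more explicit than what the paper writes.

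Your closing remark, however, deserves more weight than you give it. The case $A=B=\{c\}$ is not a technicality to be ``set aside'': it is a genuine counterexample to the proposition as stated. There $A\cap B=\{c\}\neq\emptyset$, yet the only endomorphism of a singleton is the identity, so every pair $(\alpha,\beta)$ has a joint extension and $A\indep B$ holds. The paper's one-line proof has exactly the same gap, since a singleton admits no nontrivial permutation. So your argument does not fail; rather, you have noticed that the stated equivalence requires a mild nondegeneracy hypothesis (for instance $|A|\geq 2$ or $|B|\geq 2$, equivalently $|A\cup B|\geq 2$), and under that hypothesis your proof is complete.
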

\begin{proof} 
	It is straightforward to check that $A$ and $B$ are independent 
	if and only if they are disjoint as 
	otherwise one could take permutations 
	of $A$ and $B$ that act differently on the intersection 
	disallowing a joint extension of 
	these permutations to $A\cup B$. 
\end{proof}

Let $\mathbf{Set}$ be the category of sets as objects and functions as morphisms. 
The coproduct $A\amalg B$ of two sets $A$ and $B$ exists and is equal 
(isomorphic) to the disjoint union of $A$ and $B$. Hence we get the following 
corollary.

\begin{corollary}
	For subsets $A,B \subseteq C$ we have $A \indep B$ if and only if $A\cup B \cong A\amalg B$. \qed
\end{corollary}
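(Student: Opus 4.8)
The plan is to combine the previous proposition (subset independence $=$ disjointness) with the fact, recalled just above, that the coproduct $A\amalg B$ in $\mathbf{Set}$ is the disjoint union. The corollary is essentially a translation of the equation $A\cap B=\emptyset$ into the language of coproducts via a cardinality/bijection argument.

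First I would observe that, as subsets of the common set $C$, we always have a surjection from the disjoint union onto the union, namely the canonical map $q:A\amalg B\to A\cup B$ sending each copy of an element to itself; this is exactly the map $q$ discussed in the general remark preceding the examples. So $A\cup B\cong A\amalg B$ holds if and only if this $q$ is injective, which happens precisely when no element of $C$ lies in both $A$ and $B$, i.e. when $A\cap B=\emptyset$. (If one prefers a pure cardinality argument: $|A\amalg B|=|A|+|B|$ as cardinals, while $|A\cup B|=|A|+|B|-|A\cap B|$ in the finite case, with the analogous statement for infinite sets showing the two agree iff the intersection is empty — but the bijection argument via $q$ is cleaner and avoids case splits on cardinality.)

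Then I would chain the equivalences: by the preceding proposition, $A\indep B$ iff $A\cap B=\emptyset$; by the observation just made, $A\cap B=\emptyset$ iff $q$ is a bijection iff $A\cup B\cong A\amalg B$. Composing these gives $A\indep B$ iff $A\cup B\cong A\amalg B$, which is the claim. Since every step is a one-line verification, the proof is genuinely short, and the \texttt{\textbackslash qed} symbol already placed at the end of the statement signals that no separate proof block is expected — or at most a single sentence pointing to the preceding proposition and the description of the coproduct in $\mathbf{Set}$.

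The only point requiring the slightest care — and hence the ``main obstacle,'' such as it is — is making precise what $A\cup B\cong A\amalg B$ means and why the relevant isomorphism must be the canonical map $q$ rather than some arbitrary bijection. Two finite sets of equal cardinality are abstractly isomorphic regardless of their intersection, so the statement should be read as asserting that $q$ itself is an isomorphism (equivalently, that $A\cup B$, equipped with its two inclusion maps from $A$ and $B$, satisfies the universal property of the coproduct). Once this is understood, injectivity of $q$ is equivalent to $A\cap B=\emptyset$ and the corollary follows immediately from the previous proposition.
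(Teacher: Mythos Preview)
Your approach is correct and matches the paper's: the corollary is stated with a \qed and no separate proof, relying exactly on the chain you describe --- the preceding proposition gives $A\indep B \Leftrightarrow A\cap B=\emptyset$, and the remark that coproducts in $\mathbf{Set}$ are disjoint unions gives the second equivalence. Your observation that ``$A\cup B\cong A\amalg B$'' must be read as the canonical map $q$ being an isomorphism (equivalently, $A\cup B$ with its inclusions satisfying the universal property) rather than mere abstract bijection is a genuine and necessary clarification --- otherwise the statement fails for, say, $A=B=\mathbb{N}$ --- and the paper leaves this entirely implicit.
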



\subsection{Vector spaces}

Let $\mathbf{Vect}_{\mathbb{F}}$ be the class (category) of vector spaces 
over the field $\mathbb{F}$. Homomorphisms between vector spaces are 
precisely the linear mappings. Recall that two subspaces $\A$ and $\B$ of a 
vector space $\C$ are linearly independent if and only if $A\cap B = \{0\}$.

We claim that subspace independence coincides with linear independence of subspaces.

\begin{proposition}
	For subspaces $\A, \B$ of a vector space $\C$ we have $\A\indep \B$ 
	if and only if $A\cap B = \{0\}$.  
\end{proposition}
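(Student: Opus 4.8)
The plan is to prove both directions separately, using Proposition~\ref{claim:unique} as the main engine.

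\medskip

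\noindent\textbf{The ``if'' direction.} Suppose $A\cap B=\{0\}$. Given linear maps $\alpha:\A\to\A$ and $\beta:\B\to\B$, I want to produce the joint extension $\gamma:\A\lor\B\to\A\lor\B$. Here $\A\lor\B=A+B$ (the sum of subspaces), and since $A\cap B=\{0\}$ this is an \emph{internal direct sum}: every element of $\A\lor\B$ can be written \emph{uniquely} as $a+b$ with $a\in A$, $b\in B$. Hence the formula $\gamma(a+b):=\alpha(a)+\beta(b)$ is well-defined, and one checks directly it is linear and restricts to $\alpha$ on $A$ and to $\beta$ on $B$, so the required diagram commutes. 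This gives $\A\indep\B$.

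\medskip

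\noindent\textbf{The ``only if'' direction.} Suppose $A\cap B\neq\{0\}$; I will exhibit $\alpha,\beta$ with no joint extension. Pick $0\neq v\in A\cap B$. Take $\alpha=\id_\A$ and let $\beta:\B\to\B$ be any linear map with $\beta(v)\neq v$ --- for instance, extend $v$ to a basis of $B$ and send $v\mapsto 2v$ (if $\mathrm{char}\,\mathbb{F}=2$, use a different scalar or a basis swap; such a $\beta$ always exists since $v\neq 0$). If a joint extension $\gamma$ existed, then viewing $v$ as an element of $A$ forces $\gamma(v)=\alpha(v)=v$, while viewing $v$ as an element of $B$ forces $\gamma(v)=\beta(v)\neq v$, a contradiction. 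Hence $\A$ and $\B$ are not subalgebra-independent.

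\medskip

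The only mild subtlety --- and the step I would be most careful about --- is ensuring in the ``only if'' direction that a linear map $\beta$ moving $v$ genuinely exists over an arbitrary field $\mathbb{F}$; this is handled by extending $\{v\}$ to a basis of $B$ (using choice if $B$ is infinite-dimensional) and defining $\beta$ freely on basis elements, choosing the image of $v$ to be any vector other than $v$, which is possible as soon as $\dim_{\mathbb{F}} B\geq 1$ and $v\neq 0$ (when $B=\mathbb{F}v$ is one-dimensional over $\mathbb{F}_2$ one instead notes $A\cap B\ne\{0\}$ already forces $A=B=\mathbb{F}_2 v$ and takes $\beta$ the zero map). Everything else is the routine verification that the direct-sum formula defines a homomorphism, which Proposition~\ref{claim:unique} already tells us is the only candidate.
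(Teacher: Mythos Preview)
Your proof is correct and follows essentially the same approach as the paper's, which compresses both directions into the single observation that $\alpha$ and $\beta$ admit a joint extension to $\A\lor\B$ iff they agree on $A\cap B$. One minor point: in the ``only if'' direction the zero map $\beta=0$ works uniformly over every field (since $\beta(v)=0\neq v$), so the case analysis is unnecessary---and your parenthetical claim that $A\cap B\neq\{0\}$ forces $A=B=\mathbb{F}_2 v$ is not quite right, as $A$ may properly contain $\mathbb{F}_2 v$, though this does not affect the argument.
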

\begin{proof}
	Take homomorphisms $\alpha:\A\to \A$ and $\beta:\B\to \B$. 
	Then $\alpha$ and $\beta$ act on the bases $\<a_i:i\in I\>=\A$ 
	and $\<b_j:j\in J\>=\B$. Any function defined on bases can be extended 
	to a linear mapping, therefore $\alpha$ and $\beta$ 
	have a common extension 
	\[
	\gamma: \< a_i, b_j: i\in I, j\in J \> \to \< a_i, b_j: i\in I, j\in J \>
	\]
	if and only if 
	they act on $A\cap B$ the same way. As $\alpha$, $\beta$ 
	were arbitrary, the latter condition is equivalent to $A\cap B=\{0\}$.
\end{proof}

Coproduct in the category $\mathbf{Vect}_{\mathbb{F}}$ of vector 
spaces over the fixed field $\mathbb{F}$ coincides with the direct sum construction.
Let us denote the direct sum (coproduct) of two subspaces $\A$, $\B$ by $\A\amalg \B$.

\begin{corollary}
	For subspaces $\A, \B$ of a vector space $\C$ we have $\A\indep \B$ 
	if and only if $\A\lor \B \cong \A\amalg \B$. \qed
\end{corollary}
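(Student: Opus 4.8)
The plan is to combine the previous proposition characterizing subspace independence with the description of coproducts in $\mathbf{Vect}_{\mathbb{F}}$ recalled just above. By the proposition, $\A\indep\B$ holds if and only if $A\cap B=\{0\}$, so it suffices to show that $A\cap B=\{0\}$ is equivalent to $\A\lor\B\cong\A\amalg\B$, where the isomorphism is understood to commute with the inclusion maps $\A\to\A\lor\B$ and $\B\to\A\lor\B$ on one side and the coproduct injections on the other.

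For the forward direction, assume $A\cap B=\{0\}$. Then the canonical map $q:\A\amalg\B\to\A\lor\B$ arising from the two inclusions (which exists by the universal property of the coproduct) is surjective, since $A\cup B$ generates $\A\lor\B$. Because the coproduct of vector spaces is the direct sum, every element of $\A\amalg\B$ is uniquely $a\oplus b$ with $a\in A$, $b\in B$, and $q(a\oplus b)=a+b$ inside $\C$. If $q(a\oplus b)=0$ then $a=-b\in A\cap B=\{0\}$, so $a=b=0$; thus $q$ is injective, hence an isomorphism, and it plainly respects the injection maps. For the converse, suppose $\A\lor\B\cong\A\amalg\B$ compatibly with the two subobject maps. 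The subspace $\A\amalg\B=\A\oplus\B$ has the property that its two subspaces $\A$ and $\B$ meet only in $\{0\}$, and this property is preserved under an isomorphism that carries the coproduct injections to the inclusions of $\A$ and $\B$ into $\A\lor\B$; hence inside $\A\lor\B$ (and therefore inside $\C$) we get $A\cap B=\{0\}$.

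I expect the only real point requiring care is bookkeeping about the maps: ``$\A\lor\B\cong\A\amalg\B$'' must be read as an isomorphism compatible with the canonical morphisms from $\A$ and $\B$, and one should make sure the surjection $q$ is indeed the coproduct-induced map so that the dimension/direct-sum argument applies cleanly. Once that is pinned down, both directions are short. Alternatively, and even more briefly, one can simply cite the preceding corollary-style reasoning together with Proposition~\ref{prop:alg-coprodindep} for the direction $\A\lor\B\cong\A\amalg\B\Rightarrow\A\indep\B$, and the displayed equivalence $A\cap B=\{0\}\Leftrightarrow \A\lor\B=\A\oplus\B$ (a standard fact about sums of subspaces) for the rest; since the corollary is stated with a $\qed$ and no proof, presumably the authors intend exactly this immediate deduction.
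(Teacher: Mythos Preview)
Your argument is correct and matches what the paper intends: the corollary carries only a \qed, so the authors are simply invoking the preceding proposition ($\A\indep\B \Leftrightarrow A\cap B=\{0\}$) together with the standard linear-algebra fact that $A\cap B=\{0\}$ is equivalent to the canonical map $\A\oplus\B\to\A\lor\B$ being an isomorphism. Your insistence that ``$\cong$'' be read as an isomorphism compatible with the injections is a worthwhile refinement the paper leaves implicit; without it the equivalence can fail in infinite dimensions (take $\A$ of countably infinite dimension and $\B\subseteq\A$ one-dimensional).
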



\subsection{Boolean algebras}\label{subsec:ba}
\def\indepBA{\parallel}

Let $\mathbf{Bool}$ be the category of Boolean algebras as 
objects with Boolean homomorphisms as morphisms.
The Boolean algebra $\C$ is the internal sum of the subalgebras 
$\A,\B \leq \C$ just in case the union $A\cup B$ generates $\C$ and 
whenever $a\in A$, $b\in B$ are non-zero elements, then $a\land b$ is non-zero 
(cf. Lemma 1 on p. 428 in \cite{givant2008introduction}).
This latter condition is called \emph{Boole-independence}: two subalgebras 
$\A, \B\leq \C$ are Boole-independent ($\A\indepBA \B$ in symbols) 
if for all $a\in A$, $b\in B$ we have $a\cap b\neq 0$ provided $a\neq 0\neq b$. 

The internal sum construction coincides with the coproduct $\A\amalg \B$ 
in the category $\mathbf{Bool}$. As before $\A\lor \B$ is the subalgebra 
(of $\C$) generated by $A\cup B$. Then we have 
$\A\lor \B \cong \A\amalg \B$ precisely when $\A\indepBA \B$.\\

We claim that Boolean subalgebra independence coincides with 
Boole-independence of subalgebras.

\begin{proposition} \label{boolprop1}
	For Boolean subalgebras $\A, \B$ of a Boolean algebra $\C$ we have
	\[ \A \indep \B \quad\Longleftrightarrow\quad \A\indepBA \B 
	\quad\Longleftrightarrow\quad \A\lor \B \cong \A\amalg \B. \]
\end{proposition}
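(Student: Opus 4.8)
The equivalence $\A\indepBA \B \Longleftrightarrow \A\lor\B\cong\A\amalg\B$ is already recorded in the text preceding the statement (it is the internal-sum characterization from \cite{givant2008introduction}), so the real work is to show $\A\indep\B \Longleftrightarrow \A\indepBA\B$. I would prove this as a cycle, or more simply prove the two implications separately, keeping Proposition~\ref{claim:unique} and Proposition~\ref{prop:alg-coprodindep} in mind.

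For the direction $\A\indepBA\B \Longrightarrow \A\indep\B$: assuming Boole-independence, we have $\A\lor\B\cong\A\amalg\B$, so Proposition~\ref{prop:alg-coprodindep} applies directly and gives subalgebra independence. (One should check the hypothesis $\A\lor\B=\A\amalg\B$ is exactly what the internal-sum statement supplies.) This direction is essentially free once the coproduct machinery is invoked.

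For the contrapositive of the other direction, suppose $\A$ and $\B$ are \emph{not} Boole-independent, so there are nonzero $a\in A$, $b\in B$ with $a\land b=0$. The plan is to exhibit a pair of homomorphisms $\alpha:\A\to\A$, $\beta:\B\to\B$ with no joint extension to $\A\lor\B$. The natural candidates are the constant-type endomorphisms coming from relativization: let $\alpha$ be the composition $\A\to\A\!\restriction\! a \hookrightarrow \A$ (i.e.\ $x\mapsto x\land a$, then reinterpret $a$ as the unit --- this is a Boolean homomorphism onto the relative algebra, composed with the inclusion back into $\A$), and similarly $\beta(y)=y\land b$ followed by the inclusion $\B\!\restriction\! b\hookrightarrow\B$. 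Wait --- one must be careful: $x\mapsto x\land a$ from $\A$ to $\A$ is \emph{not} a homomorphism (it doesn't preserve complements or the top element). The correct move is to pick genuine endomorphisms of $\A$ and $\B$; for instance if $a$ is a complemented "block" one can use the retraction $x\mapsto (x\land a)\lor(x_0\land -a)$ for a fixed choice... this is getting delicate. The cleanest route is: use Stone duality. Dualize to $\hat\A,\hat\B$ as continuous quotients (images) of $\hat{\A\lor\B}$; Boole-independence says the natural map $\hat{\A\lor\B}\to\hat\A\times\hat\B$ is surjective, non-independence says it misses some point, hence some clopen rectangle $U\times V$ (with $U,V$ nonempty clopen, corresponding to $a,b$) is disjoint from the image. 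Then take self-maps of $\hat\A$ and $\hat\B$ that are constant into $U$ and into $V$ respectively (constant maps are continuous, hence dual to Boolean endomorphisms $\alpha,\beta$). Any joint extension $\gamma$ would dualize to a continuous self-map $\hat\gamma$ of $\hat{\A\lor\B}$ whose composite with the projection to $\hat\A\times\hat\B$ lands in $U\times V$; but that composite equals the restriction of $\hat{\A\lor\B}\to\hat\A\times\hat\B$ precomposed with $\hat\gamma$, which must land in the image --- contradiction with disjointness from $U\times V$.

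The main obstacle is precisely this last step: producing the failing pair $(\alpha,\beta)$ and verifying no $\gamma$ works. Working on the Stone dual side makes "pick a constant map into the bad rectangle" transparent and makes the obstruction to a joint $\gamma$ a clean point-chasing argument; the alternative algebraic construction (finding honest Boolean endomorphisms of $\A$ and $\B$ that force $\gamma$ to send some element into the principal ideal $(a]$ and another into $(b]$ simultaneously, contradicting $a\land b=0$ via Proposition~\ref{claim:unique}'s formula $\gamma(t^{\A\lor\B}(\vec a,\vec b))=t^{\A\lor\B}(\alpha(\vec a),\beta(\vec b))$) is also viable but requires more care to ensure the maps are well-defined homomorphisms. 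I would present the Stone-dual argument and remark that the constant-map endomorphisms on each factor are exactly what break independence, mirroring the "permutations acting differently on the intersection" idea used in the Sets and Vector spaces cases.
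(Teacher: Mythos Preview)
Your proposal is correct. The easy direction ($\A\indepBA\B\Rightarrow\A\indep\B$ via Proposition~\ref{prop:alg-coprodindep}) matches the paper exactly, and your Stone-dual argument for the converse is sound: non-Boole-independence means the canonical map $\widehat{\A\lor\B}\to\widehat{\A}\times\widehat{\B}$ misses the nonempty clopen rectangle $U_a\times U_b$, and the constant maps into chosen points of $U_a$ and $U_b$ dualize to endomorphisms whose joint extension would force the image of that canonical map to hit the rectangle.

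The paper's argument for the converse is more elementary and avoids Stone duality entirely. Given nonzero $a\in A$, $b\in B$ with $a\land b=0$, it takes $\alpha:\A\to\A$ to be an ultrafilter homomorphism sending $a\mapsto 1$ (hence $a'\mapsto 0$) and simply $\beta=\id_B$. A joint extension $\gamma$ would then satisfy $\gamma(b)=b\neq 0$ while $b\le a'$ forces $\gamma(b)\le\gamma(a')=0$, a one-line contradiction. Note that your Stone-dual ``constant map into $U_a$'' is precisely the dual of this ultrafilter homomorphism, so under the hood the witnessing endomorphism on $\A$ is the same in both proofs; the difference is that the paper pairs it with the identity on $\B$ and finishes with a single order inequality, whereas you pair it with a second ultrafilter map on $\B$ and finish topologically. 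Your detour through relativization (correctly abandoned) and the duality machinery is unnecessary: once you have the ultrafilter endomorphism on $\A$, the identity on $\B$ already breaks joint extendability.
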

\begin{proof}
	The second equivalence 
	$\A\indepBA \B \quad\Longleftrightarrow\quad \A\lor \B = \A\amalg \B$ is clear. 
	By Proposition \ref{prop:alg-coprodindep} coproduct injections are 
	always independent, therefore we have
	\[ \A\indepBA \B \quad\Rightarrow\quad \A \indep \B.  \]
	
	As for the converse implication 
	assume $\A\indep \B$. By way of contradiction
	suppose there are non-zero elements $a\in A$, $b\in B$ 
	so that $a\cap b = 0$. For an element $x$ let $x'$ stand for the Boolean negation 
	(complement) of $x$.	
	Take a homomorphism $\alpha:\A\to \A$ 
	such that $\alpha(a) = 1\in A$ and $\alpha(a')=0\in A$ 
	(e.g. take an ultrafilter in $\A$ that contains $a$, and send 
	elements belonging to the ultrafilter to $1\in A$).
	Take $\beta = \id_B$. This two homomorphisms cannot 
	be jointly extended to a homomorphism $\gamma:\A\lor \B\to \A\lor \B$
	because such a joint extension $\gamma$ would 
	satisfy $\gamma(a') = \alpha(a') = 0$ and 
	$\gamma(b) = \beta(b) = b \neq 0$.
	As $b\subseteq a'$ it must follow that 
	$\gamma(b) \subseteq \gamma(a') = 0$; contradiction.
\end{proof}

We remark that $\A\indepBA \B$ implies $A\cap B = \{0,1\}$ (for if $0\neq a \neq 1$ was 
an element of $A\cap B$, then taking $a\in A$ and $a'\in B$ would witness non-Boole-independence).
Thus, similarly to the previous cases, subalgebra-independence requires that the two subalgebras
in question intersect in the minimal subalgebra.\\

Notice that Boolean independence coincides with logical independence
if the Boolean algebras are viewed as the Lindenbaum--Tarski algebras of a 
classical propositional logic: $a\land b\neq 0$ entails that there is an 
interpretation on $C$ that makes $a\land b$ hence both $a$ and $b$ true; 
i.e. any two propositions that are not contradictions can be jointly true 
in some interpretation. Therefore, Boolean-subalgebra independence
captures logical independence in the category $\mathbf{Bool}$.


\subsection{Abelian groups}

The category $\mathbf{AbGrp}$ contains commutative groups as objects and group homomorphisms as arrows. 
The commutative group $\mathfrak{G}$ is the internal direct sum of its two subgroups $\mathfrak{H}$ and $\mathfrak{F}$ 
if and only if $\mathfrak{G}$ is generated by $H\cup F$ and $H\cap F = \{e\}$ (here and later on, $e$ is the unit element of the group).
(Internal) direct sums are precisely the coproducts, denoted by $\A\amalg\B$, in the category $\mathbf{AbGrp}$. 

We claim that abelian-subgroup independence coincides with having the trivial group as the intersection.

\begin{proposition}\label{prop:groupintersection}
	For subgroups $\A, \B$ of the commutative group $\C$ we have
	\[ \A \indep \B \quad\Longleftrightarrow\quad A\cap B = \{e\} \quad\Longleftrightarrow\quad \A\lor \B \cong \A\amalg \B. \]
\end{proposition}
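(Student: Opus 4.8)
The plan is to prove the two equivalences in Proposition~\ref{prop:groupintersection} by treating them separately, following the pattern already established in the Boolean case. The equivalence $A\cap B=\{e\}\Longleftrightarrow \A\lor\B\cong\A\amalg\B$ is the standard internal-direct-sum criterion for abelian groups recalled just above the statement, so it requires no real argument; and the direction $A\cap B=\{e\}\Rightarrow\A\indep\B$ then follows immediately from Proposition~\ref{prop:alg-coprodindep}, since in that case $\A\lor\B$ literally is the coproduct $\A\amalg\B$ and coproduct injections are always subalgebra-independent. So the whole content of the proposition is the remaining implication $\A\indep\B\Rightarrow A\cap B=\{e\}$.

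For that implication I would argue by contraposition: assume there is an element $g\in A\cap B$ with $g\neq e$, and produce homomorphisms $\alpha:\A\to\A$, $\beta:\B\to\B$ that cannot be jointly extended to $\A\lor\B$. The natural choice is to take $\beta=\id_\B$ and to take $\alpha$ a homomorphism of $\A$ that kills $g$, i.e. $\alpha(g)=e$ while $\alpha$ is not the identity on $g$ — concretely, compose the quotient map $\A\to\A/\langle g\rangle_{\mathrm{nc}}$ (the normal, here just ordinary, subgroup generated by $g$) with any embedding of $\A/\langle g\rangle$ back into $\A$; since $\A$ is abelian one can instead simply use a retraction onto a complement when $\langle g\rangle$ is a direct summand, but to be safe one should phrase it via the quotient and an embedding, exactly as the Boolean proof uses an ultrafilter and a map to $\{0,1\}$. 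Then by Proposition~\ref{claim:unique} any joint extension $\gamma$ would have to satisfy $\gamma(g)=\alpha(g)=e$ because $g\in A$, but also $\gamma(g)=\beta(g)=g\neq e$ because $g\in B$; this contradiction shows no joint extension exists, hence $\A\not\indep\B$.

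The one point that needs a little care — and the main obstacle — is the very first step of that argument: guaranteeing that $\A$ (an arbitrary abelian group) admits a homomorphism $\alpha:\A\to\A$ with $\alpha(g)=e$ but $\alpha$ not identically trivial on $g$, i.e. that we can ``see'' the nontriviality of $g$ by a self-map that collapses it. If $g$ has infinite order or if $\langle g\rangle$ is a direct summand this is easy. The potential trouble is an element like $g$ of order $p$ in a group where $\langle g\rangle$ is not a summand (e.g. $g=p\in\mathbb{Z}/p^2\mathbb{Z}$): then every endomorphism of $\A$ killing $g$ might be forced to be zero or might not embed back nicely. I expect the paper sidesteps this by noting it suffices to have $\alpha:\A\to\A$ with $\alpha(g)=e\neq g$ together with $\beta=\id_\B$ — one does not need $\alpha$ injective, only that $\alpha(g)\neq\beta(g)$ — so taking $\alpha$ to be the composite $\A\twoheadrightarrow\A/\langle g\rangle\hookrightarrow\A$ works whenever $\A/\langle g\rangle$ embeds in $\A$, and when it does not one instead exploits that a finite cyclic group embeds in a suitable power or uses that any abelian group embeds in a divisible one; but since the ambient object here is just $\A$ itself, the cleanest fix is to observe that in fact $\alpha$ need only be a homomorphism $\A\to\A$, and the map sending everything in a chosen subgroup containing $g$ to $e$ and being the identity on a complement exists once we pass to the divisible hull — I would check this case analysis carefully, or, more simply, reduce to the observation that if $A\cap B\neq\{e\}$ then $\A\lor\B$ cannot be a coproduct and then quote that for coproducts (and only for them, among such configurations) joint extensions always exist, closing the loop via the compatibility-with-$\ker(q)$ discussion preceding the Sets subsection.
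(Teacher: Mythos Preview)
Your overall strategy is exactly the paper's: the second equivalence is the standard internal-direct-sum criterion, the implication $A\cap B=\{e\}\Rightarrow\A\indep\B$ comes from Proposition~\ref{prop:alg-coprodindep}, and the remaining implication is handled by contraposition, picking $g\in A\cap B$ with $g\neq e$, setting $\beta=\id_{\B}$, and choosing $\alpha:\A\to\A$ with $\alpha(g)=e$ so that $\alpha(g)\neq\beta(g)$ forbids a joint extension.

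The gap is in your construction of $\alpha$. You try to factor through the quotient $\A/\langle g\rangle$ and embed back into $\A$, then worry (correctly) that such an embedding need not exist---for instance $\A=\mathbb{Z}$, $g=2$ gives $\A/\langle g\rangle\cong\mathbb{Z}_2$, which does not embed in $\mathbb{Z}$. Your proposed fixes do not close this: passing to the divisible hull does not produce an endomorphism of $\A$ itself, and your final suggestion (``only for coproducts do joint extensions always exist'') is precisely the implication you are trying to prove, so it is circular.

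The paper sidesteps all of this with a one-line choice you overlooked: take $\alpha$ to be the \emph{trivial} endomorphism, $\alpha(x)=e$ for every $x\in A$. This is always a homomorphism of any group into itself, it satisfies $\alpha(g)=e\neq g=\beta(g)$, and the contradiction is immediate. No quotients, summands, or divisible hulls are needed.
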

\begin{proof}
	As $\A\lor \B$ is the subgroup of $\C$ generated by $A\cup B$, the equivalence 
	\[A\cap B = \{e\} \quad\Longleftrightarrow\quad \A\lor \B \cong \A\amalg \B\] is clear. 
	Since summands of a coproducts are always independent (Proposition \ref{prop:alg-coprodindep}) 
	we also have 
	\[A\cap B = \{e\} \quad\Longrightarrow\quad \A\indep \B.\]
	As for the other direction suppose, by way of contradiction, that there is $e\neq g\in A\cap B$. 
	Take $\alpha:\A\to \A$, $\alpha(x)=e$ and $\beta = \id_B$. 
	These two homomorphisms cannot have a joint extension 
	to $\A\lor\B$ as $\alpha(g)\neq \beta(g)$; contradicting $\A\indep \B$.
\end{proof}

Independence of subgroups $\A,\B$ of $\C$ was defined in \cite{SethWarnerModernAlgebra} by the
condition $A\cap B = \{e\}$. In the case of Abelian groups, subgroup independence gives back this
exact notion, however, the case of general groups is much more complicated.


\subsection{Groups}

Consider the category $\mathbf{Grp}$ of groups with homomorphisms. Coproducts in this category exist and are isomorphic to free products. Recall that the free product of two groups is infinite and non-commutative even if both groups are finite or commutative (unless one of them is trivial as in this case the free product is isomorphic to one of the two groups).
Suppose $\A, \B\leq \C$. The proof of Proposition \ref{prop:groupintersection} shows that $\A\indep \B$ implies $A\cap B = \{e\}$.

\begin{proposition}
	If $\A\indep\B$, then $A\cap B=\{e\}$. \qed
\end{proposition}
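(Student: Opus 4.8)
The plan is to reuse the argument already given in the proof of Proposition \ref{prop:groupintersection}, which in fact never used commutativity for the direction \(\A\indep\B \Rightarrow A\cap B=\{e\}\). So I would argue by contradiction: suppose \(\A,\B\leq\C\), \(\A\indep\B\), and pick some \(g\in A\cap B\) with \(g\neq e\). The goal is to exhibit two homomorphisms \(\alpha:\A\to\A\) and \(\beta:\B\to\B\) that cannot be jointly extended to \(\A\lor\B\), contradicting independence.

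The natural choice is \(\alpha:\A\to\A\) the trivial homomorphism \(\alpha(x)=e\) for all \(x\in A\) (this is always a homomorphism, for any group \(\A\)), and \(\beta=\id_\B\). Now suppose a joint extension \(\gamma:\A\lor\B\to\A\lor\B\) existed. Restricting \(\gamma\) along the inclusion \(A\hookrightarrow\A\lor\B\) forces \(\gamma(g)=\alpha(g)=e\), while restricting along \(B\hookrightarrow\A\lor\B\) forces \(\gamma(g)=\beta(g)=g\). Since \(g\in A\cap B\), both constraints apply to the same element, so we get \(e=\gamma(g)=g\), contradicting \(g\neq e\). Hence no nontrivial element lies in \(A\cap B\), i.e. \(A\cap B=\{e\}\).

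There is essentially no obstacle here: the only subtlety worth flagging is that the argument is genuinely one-directional. In the Abelian case one also had the converse (via the coproduct being the internal direct sum), but for general groups the coproduct is the free product, and \(\A\lor\B\) inside \(\C\) is typically a proper quotient of \(\A\amalg\B\); the compatibility-with-\(\ker(q)\) analysis sketched earlier in the section shows why \(A\cap B=\{e\}\) is no longer sufficient. So the proof of this proposition should simply cite the relevant half of the proof of Proposition \ref{prop:groupintersection} — the \(\mathbf{Grp}\) statement is strictly the ``easy direction'' carried over verbatim — which is presumably why the excerpt marks it with \texttt{\textbackslash qed} and no written-out proof.
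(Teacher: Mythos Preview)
Your argument is correct and is exactly the one the paper intends: it explicitly says that the proof of Proposition \ref{prop:groupintersection} already establishes this implication (commutativity is never used in that direction), which is why the statement is closed with \qed and no separate proof. Your observation that only the ``easy direction'' carries over to $\mathbf{Grp}$ is also precisely the point the paper makes in the surrounding discussion.
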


On the other hand, consider the subgroups $\mathbb{Z}_2, \mathbb{Z}_3$ of $\mathbb{Z}_6$ (here $\mathbb{Z}_n$ is the modulo $n$ group with addition). These subgroups are independent as Abelian 
groups, and since any homomorphic image of a commutative group is commutative, they are independent as groups, too. 
But the free product (coproduct) $\mathbb{Z}_2\amalg \mathbb{Z}_3$ is infinite, thus it is not isomorphic to 
$\mathbb{Z}_2 \lor \mathbb{Z}_3 = \mathbb{Z}_6$. This is an example for an algebraic category where subalgebra independence
and being an internal coproduct are not equivalent. \\


Using the next proposition we can draw some useful sufficient conditions 
for subgroup independence.

\begin{proposition}\label{prop:mikorftln}
	$\A\indep\B$ if and only if for all homomorphisms 
	$\alpha:\A\to\A$ and $\beta:\B\to\B$ and elements $a_i\in A$, $b_i\in B$ 
	we have
	\[ \prod a_ib_i = e \;\;\text{ implies }\;\; \prod\alpha(a_i)\beta(b_i) = e 	\]
\end{proposition}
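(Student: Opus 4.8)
The plan is to characterize $\A\indep\B$ through the well-definedness observation made just before the statement, namely that a joint extension $\gamma$ exists precisely when $\alpha\amalg\beta$ descends along the canonical quotient $q:\A\amalg\B\to\A\lor\B$. I would begin by recalling that every element of $\A\lor\B$ can be written as a product $\prod a_ib_i$ with $a_i\in A$, $b_i\in B$ (since $\A\lor\B$ is generated by $A\cup B$ and, in a group, every term reduces to such an alternating product after using inverses; note $a_i$ here is allowed to be a power, and we may absorb inverses into the $a_i,b_i$ themselves). Two such expressions represent the same element of $\A\lor\B$ exactly when their ``ratio'' is a relation, i.e. when some product $\prod a_ib_i$ equals $e$ in $\A\lor\B$. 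Thus the kernel congruence of $q$ is generated by exactly the data $\prod a_ib_i=e$.

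Next I would carry out both implications. For the forward direction, assume $\A\indep\B$ and fix $\alpha,\beta$ together with elements satisfying $\prod a_ib_i=e$ in $\A\lor\B$. By Proposition~\ref{claim:unique} the joint extension $\gamma$ acts on this element by $\gamma(\prod a_ib_i)=\prod\alpha(a_i)\beta(b_i)$; since $\gamma$ is a homomorphism and $\prod a_ib_i=e$, we get $\prod\alpha(a_i)\beta(b_i)=\gamma(e)=e$, which is the required implication. For the converse, assume the displayed implication holds for all $\alpha,\beta$ and all such tuples. Given $\alpha:\A\to\A$ and $\beta:\B\to\B$, define $\gamma:\A\lor\B\to\A\lor\B$ by $\gamma(\prod a_ib_i)=\prod\alpha(a_i)\beta(b_i)$. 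The content is that this is well-defined: if $\prod a_ib_i=\prod a'_jb'_j$ in $\A\lor\B$, then combining the two expressions (multiplying one by the inverse of the other, which again has the alternating form since $(a b)^{-1}=b^{-1}a^{-1}$ and one reassociates) yields a product equal to $e$, and the hypothesis forces the images to coincide as well. Once well-definedness is secured, that $\gamma$ is a homomorphism extending $\alpha$ and $\beta$ is immediate from the definition, so $\A\indep\B$.

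The main obstacle I anticipate is the bookkeeping in the well-definedness step: passing from ``$\prod a_ib_i=\prod a'_jb'_j$'' to an instance ``$\prod c_id_i=e$'' of the hypothesis requires forming $\big(\prod a_ib_i\big)\big(\prod a'_jb'_j\big)^{-1}=e$ and then re-expressing this as an alternating product of elements of $A$ and elements of $B$ — one must be a little careful that an $A$-block abutting another $A$-block (after inversion reverses order) can be merged, and similarly for $B$-blocks, and that trailing/leading factors of the wrong type are harmless since we may always insert $e\in A\cap B$. This is routine but is the only place where something could go wrong; everything else follows formally from Proposition~\ref{claim:unique} and the definition of $\A\lor\B$ as the generated subalgebra.
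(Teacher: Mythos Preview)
Your proposal is correct and follows essentially the same approach as the paper: both reduce the existence of a joint extension to the well-definedness of $\gamma(q(x))=q((\alpha\amalg\beta)(x))$, which the paper phrases compactly as $(\alpha\amalg\beta)(\N)\subseteq\N$ for $\N=\ker(q)$, while you unpack it directly as the implication on alternating products. The paper's version is terser and leaves implicit exactly the bookkeeping (identifying elements of $\N$ with products $\prod a_ib_i$ equal to $e$ in $\A\lor\B$) that you spell out; your more explicit treatment of the converse and of the merging of adjacent $A$- or $B$-blocks is a harmless elaboration rather than a different idea.
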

\begin{proof}
	Consider the diagram below and let $\N$ be the normal subgroup of $\A\amalg\B$
	corresponding to the kernel $\ker(q)$.
	\begin{center}
		\begin{tikzpicture}
			  \matrix (m) [matrix of math nodes,row sep=3em,column sep=4em,minimum width=2em]
			  {
				 \A\amalg\B  & \A \amalg \B \\
				 \A\lor\B  & \A\lor\B\\};
			  \path[-stealth]
				(m-1-1) edge[->] node [above] {$\alpha\amalg\beta$} (m-1-2)
						edge node [left]  {$q$} (m-2-1)
				(m-1-2) edge node [right] {$q$} (m-2-2)
				(m-2-1) edge[dotted,->] node [below] {$\gamma$} (m-2-2);
		\end{tikzpicture}	
	\end{center}	
	The joint extension $\gamma:\A\lor\B\to\A\lor\B$ of $\alpha$ and $\beta$ exists
	if and only if $(\alpha\amalg\beta)(\N)\subseteq \N$ as this is equivalent to
	that the mapping
	\[  \gamma(q(x)) = q((\alpha\amalg\beta)(x)) \]
	is well-defined. 
\end{proof}

Observe that Proposition \ref{claim:unique} implies that whenever $\alpha$ and $\beta$ has 
a joint extension $\gamma$, then $\gamma$ is given by the equation
\[ \gamma\big( \prod a_ib_i \big) \;=\; \prod \alpha(a_i)\beta(b_i) \]
for every element $\prod a_ib_i$ of $\A\lor\B$ (where $a_i\in A$, $b_i\in B$).

\begin{proposition}
	If $\A$ and $\B$ are normal subgroups, such that $A\cap B = \{e\}$, then
	$\A\indep\B$.
\end{proposition}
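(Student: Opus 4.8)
The plan is to reduce everything to the criterion of Proposition~\ref{prop:mikorftln} by first proving the classical fact that, under the hypotheses, $\A$ and $\B$ commute elementwise. Concretely, for $a\in A$ and $b\in B$ I would look at the commutator $a^{-1}b^{-1}ab$: grouping it as $a^{-1}(b^{-1}ab)$ and using normality of $\A$ shows it lies in $A$, while grouping it as $(a^{-1}b^{-1}a)b$ and using normality of $\B$ shows it lies in $B$; hence $a^{-1}b^{-1}ab\in A\cap B=\{e\}$, so $ab=ba$.

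Next I would observe that the very same statement applies to the images: for any homomorphisms $\alpha:\A\to\A$ and $\beta:\B\to\B$ we have $\alpha(a)\in A$ and $\beta(b)\in B$, so $\alpha(a)\beta(b)=\beta(b)\alpha(a)$. Consequently, in any finite product $\prod a_ib_i$ with $a_i\in A$, $b_i\in B$ the $A$-factors and $B$-factors may be shuffled past one another, giving $\prod a_ib_i=\big(\prod a_i\big)\big(\prod b_i\big)$ and likewise $\prod \alpha(a_i)\beta(b_i)=\big(\prod \alpha(a_i)\big)\big(\prod \beta(b_i)\big)=\alpha\big(\prod a_i\big)\,\beta\big(\prod b_i\big)$.

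Now suppose $\prod a_ib_i=e$. Then $\prod a_i=\big(\prod b_i\big)^{-1}$ lies in $A\cap B=\{e\}$, forcing $\prod a_i=e=\prod b_i$, and therefore $\prod \alpha(a_i)\beta(b_i)=\alpha(e)\beta(e)=e$. This is exactly the implication required by Proposition~\ref{prop:mikorftln}, so $\A\indep\B$. (Equivalently, one could note that $\A\lor\B$ is the internal direct product of $\A$ and $\B$ and define the joint extension directly by $\gamma(ab)=\alpha(a)\beta(b)$, checking well-definedness from $A\cap B=\{e\}$ and homomorphism from the commutation; but routing through Proposition~\ref{prop:mikorftln} is shorter.)

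The only step requiring any idea is the elementwise commutation of $\A$ and $\B$ via the commutator trick; everything afterward is bookkeeping with a criterion already established, so that first step is where I expect the (very mild) difficulty to lie.
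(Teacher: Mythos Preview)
Your proof is correct and follows essentially the same route as the paper: establish elementwise commutation of $\A$ and $\B$ via the commutator trick, use this to collapse $\prod a_ib_i$ into $(\prod a_i)(\prod b_i)$, deduce from $A\cap B=\{e\}$ that both factors are trivial, and feed this into Proposition~\ref{prop:mikorftln}. The only cosmetic difference is that the paper groups the commutator as $aba^{-1}b^{-1}$ rather than $a^{-1}b^{-1}ab$.
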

\begin{proof}
	If $\A$ and $\B$ are normal subgroups with $A\cap B=\{e\}$, then $ab=ba$ holds
	for all $a\in A$ and $b\in B$. For, $a(ba^{-1}b^{-1})\in A$ and 
	$(aba^{-1})b^{-1}\in B$, and thus $aba^{-1}b^{-1} \in A\cap B = \{e\}$. 
	Let us apply Proposition \ref{prop:mikorftln}. Take homomorphisms $\alpha$ and 
	$\beta$ and elements $a_i\in A$ and $b_i\in B$. Write $a=\prod a_i$ and $b=\prod b_i$.
	By the first observation $\prod a_ib_i = ab$ follows. 
	Thus, if $\prod a_ib_i =e$, then $ab=e$. 
	As $a\in A$, $b\in B$ and $A\cap B=\{e\}$, we have $a=b=e$. Therefore
	$\alpha(a)\beta(b)=e$. Using the homomorphism property and reordering the product
	we get $\prod \alpha(a_i)\beta(b_i)=e$ as desired.
\end{proof}

However, if one of the subgroups is normal but the other is not, then they cannot be
subgroup independent.

\begin{proposition} \label{ppp}
	If $\A$ and $\B$ are subgroups such that $\A$ is normal but $\B$ is not normal in their
	join, then $\A\not\indep\B$.
\end{proposition}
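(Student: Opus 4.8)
The plan is to use the criterion of Proposition \ref{prop:mikorftln} and exhibit a single pair of homomorphisms $\alpha:\A\to\A$, $\beta:\B\to\B$ — in fact $\alpha=\id_{\A}$ and $\beta=\id_{\B}$ will not do, so I will take $\alpha=\id_{\A}$ and let $\beta$ be conjugation on $\B$ by a suitable element — for which the implication $\prod a_ib_i=e \Rightarrow \prod\alpha(a_i)\beta(b_i)=e$ fails. Since $\B$ is not normal in $\A\lor\B$, there is an element $c\in\A\lor\B$ with $c\B c^{-1}\neq\B$; writing $c$ as a word in elements of $A$ and $B$ and using that $\A$ is normal (so that every $a b a^{-1}$ with $a\in A$ can be pushed into $A\cdot b$, i.e. $aba^{-1}=(aba^{-1}b^{-1})b$ with $aba^{-1}b^{-1}\in A$ by normality of $\A$), I can reduce to the case where $c=b_0 a_0$ for some $a_0\in A$, $b_0\in B$, or even just $c\in A$ after absorbing the $B$-part. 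The cleanest choice: since $\A\lhd\A\lor\B$, conjugation by any $a_0\in A$ normalizes $\A$; if conjugation by $a_0$ also normalized $\B$ for every $a_0\in A$, then, together with the fact that $B$ normalizes itself, $A\cup B$ would consist of elements normalizing $\B$, hence $\A\lor\B$ would normalize $\B$, contradicting non-normality. So fix $a_0\in A$ with $a_0\B a_0^{-1}\neq\B$, and pick $b_0\in B$ with $a_0 b_0 a_0^{-1}\notin B$.

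Now the witnessing relation: take $\alpha$ to be the inner automorphism $x\mapsto a_0^{-1}x a_0$ of $\A$ (legitimate since $\A$ is a group, so inner automorphisms are homomorphisms $\A\to\A$), and $\beta=\id_{\B}$. Consider the element $g=a_0 b_0 a_0^{-1}\in\A\lor\B$. Because $\A$ is normal, $a_0 b_0 a_0^{-1} = n\,b_0$ for some $n\in A$? — no; rather use $b_0^{-1}a_0^{-1}b_0\in A$ by normality... Let me instead argue directly with Proposition \ref{prop:mikorftln}: I want elements $a_i\in A$, $b_i\in B$ with $\prod a_ib_i=e$ but $\prod\alpha(a_i)\beta(b_i)\neq e$. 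Using normality of $\A$, write $a_0 b_0 a_0^{-1}=b_0\cdot(b_0^{-1}a_0 b_0 a_0^{-1})$ where $a_0 b_0 a_0^{-1}b_0^{-1}\in A$; call it $a_1:=a_0 b_0 a_0^{-1}b_0^{-1}\in A$, so $a_0 b_0 a_0^{-1}=a_1 b_0$. Then the identity $a_0\,b_0\,a_0^{-1}\,b_0^{-1}\,a_1^{-1}=e$ is a relation of the required form (after inserting trivial $A$- or $B$-letters to alternate), and applying $\alpha=\mathrm{conj}_{a_0^{-1}}$ to the $A$-letters and $\id$ to the $B$-letters gives $\alpha(a_0)b_0\alpha(a_0^{-1})b_0^{-1}\alpha(a_1^{-1})=a_0^{-1}a_0 a_0 b_0 a_0^{-1}a_0^{-1}b_0^{-1}a_0^{-1}a_1^{-1}a_0$, which one checks is $\neq e$ precisely because $a_0 b_0 a_0^{-1}\notin B$ — equivalently $a_1 b_0 a_1^{-1}\neq$ the original. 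The bookkeeping here is routine group-word manipulation.

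The main obstacle is purely presentational: organizing the word calculation so that the failure of the relation under $(\alpha,\beta)$ is visibly equivalent to $a_0 b_0 a_0^{-1}\notin B$, rather than getting lost in a tangle of conjugates. A slicker route, which I would actually adopt, avoids Proposition \ref{prop:mikorftln} entirely: if $\A\indep\B$, then by Proposition \ref{claim:unique} the joint extension $\gamma$ of $\alpha=\mathrm{conj}_{a_0^{-1}}$ and $\beta=\id_\B$ exists and satisfies $\gamma(x)=a_0^{-1}xa_0$ for $x\in A$ and $\gamma(b)=b$ for $b\in B$; but then $\gamma$ is forced on $\A\lor\B$, and evaluating $\gamma$ on $g=a_0 b_0 a_0^{-1}\in\A\lor\B$ two ways — as $\gamma(a_0)\gamma(b_0)\gamma(a_0^{-1})=a_0^{-1}\cdot a_0 b_0 a_0^{-1}\cdot a_0=b_0$, versus: since $g\in\A\lor\B$ and, as $\A\lhd\A\lor\B$, we have $g=a_1 b_0$ with $a_1\in A$, giving $\gamma(g)=\gamma(a_1)\gamma(b_0)=a_0^{-1}a_1 a_0\, b_0$ — yields $b_0=a_0^{-1}a_1 a_0 b_0$, i.e. $a_1=e$, i.e. $a_0 b_0 a_0^{-1}=b_0\in B$, contradicting the choice of $b_0$. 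This is the cleanest argument and the one I would write out in full; the only thing to check carefully is that $\gamma$, being a homomorphism agreeing with $\alpha$ on $A$ and with $\id$ on $B$, is indeed determined as claimed on products, which is exactly Proposition \ref{claim:unique}.
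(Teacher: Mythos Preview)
Your argument has a genuine gap: the specific pair $(\alpha,\beta)$ you choose --- $\alpha=\mathrm{conj}_{a_0^{-1}}$ restricted to $\A$, $\beta=\id_\B$ --- does \emph{not} in general fail to have a joint extension, so it cannot witness $\A\not\indep\B$. The computational slip is that $\alpha(a_0)=a_0^{-1}a_0a_0=a_0$ (conjugation fixes the conjugating element), so
\[
\gamma(a_0 b_0 a_0^{-1})=\gamma(a_0)\gamma(b_0)\gamma(a_0^{-1})=a_0\,b_0\,a_0^{-1}=g,
\]
not $b_0$ as you wrote. Comparing with the other expression $\gamma(g)=\alpha(a_1)b_0=a_0^{-1}a_1a_0\,b_0$ only yields $[a_0,a_1]=e$, which need not be a contradiction. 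More decisively, if $\A$ happens to be abelian (e.g.\ $\A=\langle(123)\rangle$, $\B=\langle(12)\rangle$ inside $S_3$, where $\A$ is normal and $\B$ is not), every inner automorphism of $\A$ is the identity, so your pair is $(\id_\A,\id_\B)$, which trivially extends. Thus neither your word-calculation approach nor the ``slicker route'' can succeed with this choice of $(\alpha,\beta)$.

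The paper's proof uses a different and simpler pair: $\alpha=\id_\A$ and $\beta$ the \emph{trivial} homomorphism $\beta(x)=e$. It also reverses the roles of $a$ and $b$: since some $a\in A$, $b\in B$ fail to commute and $\A$ is normal, the element $a':=bab^{-1}$ lies in $A$ with $a'\neq a$. A joint extension $\sigma$ would then give $\sigma(bab^{-1})=\beta(b)\alpha(a)\beta(b)^{-1}=a$ on one hand and $\sigma(a')=\alpha(a')=a'$ on the other, contradicting $bab^{-1}=a'$. The key idea you are missing is to kill the $B$-contribution entirely (via the trivial map) rather than to perturb the $A$-contribution by an inner automorphism; the latter is too tame to detect the non-normality of $\B$.
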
	
\begin{proof}
	We can assume $A\cap B=\{e\}$ as this condition is necessary for subgroup independence.
	
	Note first that given the assumptions there must exist $a\in A$ and $b\in B$ such that
	$ab\neq ba$. Otherwise, we would have $aBa^{-1} = B$ for all $a\in A$, and thus
	\[ gBg^{-1} = a_1b_1...a_nb_nBb^{-1}_na^{-1}_n...b^{-1}_1a^{-1}_1 \]
	would yield $B$, contradicting $B$ being not normal. 
	
	Pick $a \in A$ and $b \in B$ with $ab \neq ba$.	Then $bab^{-1} \neq a$, 
	but $bab^{-1} \in A$ since $\A$ is a normal subgroup.
	Therefore $bab^{-1} = a' \neq a$ and $a' \in A$. 
	Let $\alpha: \A \rightarrow \A$ be the identity function and 
	$\beta: \B \rightarrow \B$ be such that $\beta(x) = e$. 
	If $\sigma$ was a joint extension of $\alpha$ and $\beta$ then we 
	would get
	\begin{eqnarray}
		\sigma(bab^{-1}) &=& \sigma(b) \sigma(a) \sigma(b^{-1}) = eae = a, \\
		\sigma(a') &=& a'.
	\end{eqnarray}
	Hence, $\sigma(bab^{-1}) \neq \sigma(a')$ which contradicts $bab^{-1} = a'$.
\end{proof}

One might be tempted to think that because normal subgroups are independent, and if exactly one of the subgroups is normal, then they are not independent, it could also be the case that two non-normal subgroups cannot be independent. Unfortunately, this is not so, as indicated by the 
example below.

\begin{example}
	Consider the group $D_{\infty}$ given by 
	the presentation $D_{\infty} = \< x, y \mid x^2=y^2=e\>$. 
	Let $A$ and $B$ be its subgroups generated respectively by $x$ and $y$. 
	Clearly $A\cong B\cong\mathbb{Z}_2$. None of $A$ and $B$ are normal 
	subgroups of $D_{\infty}$, yet $A\indep_{D_{\infty}} B$ since the only 
	homomorphisms $A\to A$ and $B\to B$ are either the 
	identical or the trivial mappings, each can be extended 
	to a joint homomorphism $D_{\infty}\to D_{\infty}$.
\end{example}

In the previous example $D_{\infty}$ is the free product of its subgroups $A$ and $B$.
The next example shows that two non-normal subgroups can be subgroup independent in
finite groups too.

\begin{example}
	Let $A = \{e, (12)\}$ and $B=\{ e, (13)(24)\}$ be subgroups of the symmetric group on four 
	elements. The subgroup generated by $A\cup B$ is isomorphic to 
	the dihedral group $D_{4}$. None of 
	$A$ or $B$ are normal subgroups, still $A\indep B$ for the same reason as in the previous
	example. 
\end{example}

We do not yet have any nice group theoretical characterization of subgroup independence
and we leave it as an open problem.


\subsection{Graphs}

Let us see a non-algebraic example.
\def\G{\mathfrak{G}}
A graph is a structure of the form $\G = (V, E)$, where $V$ is a set and $E$ is a binary relation $E\subseteq V\times V$. There are at least two different types of homomorphisms between graphs: weak and strong homomorphisms. Let us recall the definitions.

\begin{definition}
	Given two graphs $(V,E)$ and $(W,F)$ the mapping $f:V\to W$ is a (weak) homomorphism if 
	\begin{equation}
		(u,v)\in E\quad \Longrightarrow \quad (f(u), f(v))\in F,
	\end{equation}
	and a strong homomorphism, if
	\begin{equation}
		(u,v)\in E\quad \Longleftrightarrow \quad (f(u), f(v))\in F.
	\end{equation}	
\end{definition}

Subgraphs can be understood in the graph theoretic way (that is, embeddings are weak homomorphisms) or as substructures (i.e. we take inclusions as strong embeddings; this corresponds to spanned subgraphs in the graph theoretic terminology). 

Let $\mathbf{Gra}_w$ and $\mathbf{Gra}_s$ respectively be the category of graphs with weak or strong homomorphisms as arrows. In both cases the coproduct of two graphs $\G_1$ and $\G_2$ exists and is (isomorphic to) their disjoint union, denoted by $\G_1\amalg \G_2$. 
By Proposition \ref{prop:alg-coprodindep} it is clear that $\G_1\indep_{\G_1\amalg\G_2}\G_2$. But not the other way around:

\begin{example}
	Call a graph $\G$ rigid if the identity is its only (weak) homomorphism. There are arbitrarily large rigid graphs \cite{nevsetvril2012sparsity, koubek1985quotients}. Take two rigid graphs $\G_1$ and $\G_2$ such that their underlying sets are not disjoint. Then $\G_1\indep_{\G_1\cup\G_2}\G_2$ are independent, nevertheless, 
	$\G_1\cup\G_2$ is not the coproduct of $\G_1$ and $\G_2$.  
\end{example}




\section{Joint extension of congruences}

A property that is strongly related to subalgebra independence is
the joint extension property of congruences. 
Suppose $\alpha:\A\to\A$ and $\beta:\B\to\B$ are homomorphisms and
there is a joint extension $\gamma:\A\lor\B\to\A\lor\B$ such 
that the diagram in Definition \ref{def:subalgindep} commutes.
This implies a relation between the kernels of the homomorphisms:
\begin{align}
	\ker(\gamma)\cap (A\times A) = \ker(\alpha),\quad\text{ and }\quad
	\ker(\gamma)\cap (B\times B) = \ker(\beta) \label{al:elso}
\end{align}
If $\A\indep\B$, then \eqref{al:elso} is the case for all congruences that are
kernels of the appropriate endomorphisms. This motivates the following definition.

\bigskip
\begin{definition}
	Let $\X$ be an algebra and $\A, \B$ be subalgebras of $\X$. We say that
	$\A$ and $\B$ are \emph{congruence-independent in} $\X$ if for any
	congruences $\theta_A\in\Con(\A)$ and $\theta_{B}\in\Con(\B)$ there is
	a congruence $\theta\in\Con(\A\lor\B)$ such that
	\[\theta\cap (A\times A) =  \theta_A,\quad \text{ and }\quad \theta\cap (B\times B)=\theta_B\]	
	We write $\A\congindep_{\X}\B$ when $\A$ and $\B$ are congruence-independent in $\X$, 
	and we might omit the subscript $\X$ when it is clear from the context.
\end{definition}
\bigskip

Notice that $\A\congindep\B$ implies $|A\cap B|\leq 1$. For if $|A\cap B|\geq 2$,
take the two congruences $\theta_A = \id_A$ and $\theta_B = B\times B$ (or $\theta_A = A\times A$ 
and $\theta_B = \id_B$).
Then no $\theta$ can have the property
\[\theta\cap (A\times A) =  \theta_A,\quad \text{ and }\quad \theta\cap (B\times B)=\theta_B\]	
as in that case we would have
\[
\theta \cap (A\cap B)^2 = \theta_A\cap (A\cap B)^2 
= \id_{A\cap B} \neq (A\cap B)^2 = \theta_B\cap (A\cap B)^2 = \theta \cap (A\cap B)^2.
\]

\mbox{ }\bigskip

The connection between subalgebra independence and congruence independence is
subtle, and already sets show that none implies the other. Take for example
$A = \{a\}$ and $B = \{a,b\}$ as subsets of a set. Then	$A\congindep B$ but 
$A\not\indep B$ witnessed by $\alpha = \id_A$ and $\beta:B\to B$, $\beta(x) = b$.
However, a proposition similar to Proposition \ref{claim:coprod} can be formulated.

\begin{proposition}\label{claim:coprod-cong}
	Consider $\A$ and $\B$ as subalgebras of the coproduct $\A\amalg\B$. Then 
	for any	congruences $\theta_A\in\Con(\A)$ and $\theta_{B}\in\Con(\B)$ there 
	is a congruence $\theta\in\Con(\A\amalg\B)$ such that
	\[\theta\cap (A\times A) =  \theta_A,\quad \text{ and }\quad 
	\theta\cap (B\times B)=\theta_B\]
\end{proposition}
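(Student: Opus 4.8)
The plan is to mimic the proof of Proposition \ref{claim:coprod}, replacing the endomorphisms there by the quotient maps associated to the given congruences and then pulling the resulting congruence back along the coproduct injections. Concretely, given $\theta_A\in\Con(\A)$ and $\theta_B\in\Con(\B)$, form the quotient algebras $\A/\theta_A$ and $\B/\theta_B$ together with the canonical surjections $\pi_A:\A\to\A/\theta_A$ and $\pi_B:\B\to\B/\theta_B$. Now embed these quotients into their coproduct: let $(\A/\theta_A)\amalg(\B/\theta_B)$ be the coproduct in $\mathbf K$ with injections $j_A$ and $j_B$, and consider the two homomorphisms $j_A\circ\pi_A:\A\to(\A/\theta_A)\amalg(\B/\theta_B)$ and $j_B\circ\pi_B:\B\to(\A/\theta_A)\amalg(\B/\theta_B)$. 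By the universal property of the coproduct $\A\amalg\B$ there is a unique homomorphism $h:\A\amalg\B\to(\A/\theta_A)\amalg(\B/\theta_B)$ with $h\circ e_A=j_A\circ\pi_A$ and $h\circ e_B=j_B\circ\pi_B$. Set $\theta:=\ker(h)\in\Con(\A\amalg\B)$; this is the candidate congruence.

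It then remains to verify the two restriction equalities $\theta\cap(A\times A)=\theta_A$ and $\theta\cap(B\times B)=\theta_B$; by symmetry it suffices to do the first. For $a,a'\in A$ we have $(a,a')\in\theta\cap(A\times A)$ iff $h(e_A(a))=h(e_A(a'))$, i.e. iff $j_A(\pi_A(a))=j_A(\pi_A(a'))$. Since $\pi_A$ is surjective, $(a,a')\in\theta_A$ is exactly $\pi_A(a)=\pi_A(a')$, so the inclusion $\theta_A\subseteq\theta\cap(A\times A)$ is immediate. For the reverse inclusion I need that $j_A$ is injective, i.e. that the coproduct injection $j_A:\A/\theta_A\to(\A/\theta_A)\amalg(\B/\theta_B)$ is a monomorphism (indeed an embedding): this is precisely the standing hypothesis in the excerpt that $\A$ and $\B$ are identified with \emph{subalgebras} of $\A\amalg\B$, applied now to the two quotient algebras, so it is legitimate to assume it. Granting this, $j_A(\pi_A(a))=j_A(\pi_A(a'))$ forces $\pi_A(a)=\pi_A(a')$, hence $(a,a')\in\theta_A$, completing the equality.

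The main obstacle, and the only point that needs care, is exactly this injectivity of the coproduct injections of the quotient algebras — in a general class $\mathbf K$ the injections into a coproduct need not be monic, and the statement tacitly relies on the same ``summands embed into the coproduct'' convention already in force for $\A$ and $\B$ themselves. Once that is in place the argument is a routine diagram chase: everything else is just the definition of kernel of a homomorphism, surjectivity of the $\pi$'s, and the universal property. I would end the proof by noting that the symmetric computation gives $\theta\cap(B\times B)=\theta_B$, so $\theta$ is the desired congruence on $\A\amalg\B$.
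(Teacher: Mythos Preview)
Your proof is correct and follows essentially the same approach as the paper: form the quotient maps, apply the universal property of $\A\amalg\B$ to get a homomorphism into $(\A/\theta_A)\amalg(\B/\theta_B)$, and take its kernel. Your argument is in fact more careful than the paper's, which simply declares $\theta=\ker(\gamma)$ ``suitable'' without checking the restriction equalities or flagging the injectivity-of-coproduct-injections issue you rightly isolate.
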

\begin{proof}
	Let $\alpha:\A\to\A/\!\theta_{A}$ and $\beta:\B\to\B/\!\theta_B$ be the
	quotient mappings. Using the universal property of the coproduct, there
	is a homomorphism $\gamma$ making the diagram below commute.
	\begin{center}
		\begin{tikzpicture}
		  \matrix (m) [matrix of math nodes,row sep=3em,column sep=4em,minimum width=2em]
		  {
			 \A  & \A \amalg \B  & \B\\
			 \A/\!\theta_A  & \A/\!\theta_A \amalg \B/\!\theta_B & \B/\!\theta_B\\};
		  \path[-stealth]
			(m-1-1) edge[->] node [above] {$e_{A}$} (m-1-2)
					edge node [left]  {$\alpha$} (m-2-1)
			(m-2-1) edge[->] node [below] {$e_{A}$} (m-2-2)
			(m-1-3) edge[->] node [above] {$e_{B}$} (m-1-2)
					edge node [right] {$\beta$} (m-2-3)
			(m-2-3) edge[->] node [below] {$e_{B}$} (m-2-2)
			(m-1-2) edge[dotted] node [right] {$\gamma$} (m-2-2);
		\end{tikzpicture}	
	\end{center}
	Then $\theta = \ker(\gamma)$ is suitable. 
\end{proof}

\begin{proposition}\label{prop:alg-coprodindep-cong}
	Subalgebras $\A$ and $\B$ of the coproduct $\A\amalg\B$ are congruence-independent
	provided $\A\lor\B = \A\amalg\B$.
\end{proposition}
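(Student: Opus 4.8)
The plan is to mirror the proof of Proposition~\ref{prop:alg-coprodindep}, which derived subalgebra-independence of coproduct summands from Proposition~\ref{claim:coprod}: here we derive congruence-independence of coproduct summands from Proposition~\ref{claim:coprod-cong} in exactly the same way. So the proof is a one-liner: \emph{Immediate from Proposition~\ref{claim:coprod-cong}.}

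In slightly more detail, suppose $\A\lor\B = \A\amalg\B$ and let $\theta_A\in\Con(\A)$ and $\theta_B\in\Con(\B)$ be arbitrary congruences. By Proposition~\ref{claim:coprod-cong}, applied to $\A$ and $\B$ viewed as subalgebras of $\A\amalg\B$ via the coproduct injections $e_A,e_B$, there is a congruence $\theta\in\Con(\A\amalg\B)$ with $\theta\cap(A\times A)=\theta_A$ and $\theta\cap(B\times B)=\theta_B$. Since $\A\amalg\B=\A\lor\B$, this $\theta$ is a congruence of $\A\lor\B$ with the required restriction properties, which is precisely the condition $\A\congindep\B$ in the definition of congruence-independence. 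As $\theta_A$ and $\theta_B$ were arbitrary, this establishes $\A\congindep_{\A\amalg\B}\B$.

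The only point needing a word of care — and I expect it to be the sole (minor) obstacle — is the identification of $\A$ and $\B$ with genuine subalgebras of $\A\amalg\B$ so that the set-level restrictions $\theta\cap(A\times A)$ and $\theta\cap(B\times B)$ make literal sense; but this is exactly the standing convention adopted in the excerpt just before Proposition~\ref{claim:coprod} (''we assume that $\A$ and $\B$ are identified with subalgebras of the coproduct $\A\amalg\B$''), and it is implicit in the statement of Proposition~\ref{claim:coprod-cong} as well. Hence there is genuinely nothing further to prove.

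\begin{proof}
	Immediate from Proposition \ref{claim:coprod-cong}.
\end{proof}
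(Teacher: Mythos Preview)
Your proposal is correct and matches the paper's proof exactly: the paper's entire argument is the single line ``Immediate from Proposition~\ref{claim:coprod-cong}.'' Your additional commentary about the identification of $\A$ and $\B$ with subalgebras of the coproduct is accurate and consistent with the paper's standing conventions.
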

\begin{proof}
	Immediate from Proposition \ref{claim:coprod-cong}.
\end{proof}

\section*{Acknowledgement}
We are grateful to the anonymous referee whose careful reading of the manuscript and helpful comments have improved the paper. Research supported in part by the Hungarian National Research, Development and Innovation Office, contract number: K-134275 and by the project no. 2019/34/E/HS1/00044 financed by the National Science Centre, Poland.

\bigskip\bigskip

\small
\begin{tabular*}{0.97\textwidth}{@{\extracolsep{\fill}} lcr}
	Alexa Gopaulsingh & Zal\'an Gyenis & \"Ovge \"Ozt\"urk \\
	Department of Logic & Department of Logic & Department of Logic \\
	E\"otv\"os Lor\'and University  & Jagiellonian University & E\"otv\"os Lor\'and University \\
\end{tabular*}
\end{document}